\documentclass{article}

\usepackage{amsmath, amsthm, amsfonts, amssymb}

\textheight 8.5in
\numberwithin{equation}{section}
\voffset -0.5in
\Large
\def\supp{\operatorname{supp}}

\def\a{\alpha}

\newtheorem{thm}{Theorem}[section]

\newtheorem{lem}{Lemma}[section]
\newtheorem{cor}{Corollary}[section]
\newtheorem{prop}{Proposition}[section]
\newtheorem{rem}{Remark}[section]
\numberwithin{equation}{section}

\setcounter{section}{0}
\title{Noncritical Weighted Hardy's Inequalities \\with compact perturbations}
\author{HIROSHI ANDO  and  TOSHIO HORIUCHI}
\date{}

\begin{document}
\maketitle
\begin{abstract}
Let  $\Omega $ be  a  bounded domain of $\mathbb R^N \ (N\ge 1)$ whose boundary  $\partial\Omega$ is a $C^2$ compact manifolds.
In the present paper  we  shall study  a variational problem relating the weighted  Hardy  inequalities with sharp missing terms established in \cite{AHL}. 
As weights  we adopted powers of the distance function $\delta(x)$ to the boundary $\partial\Omega$.
\footnote{Keywords: Weighted Hardy's inequalities, nonlinear variational problem, 
Weak  Hardy property, $p$-Laplace operator with weights,\\
2000 mathematics Subject Classification: Primary 35J70, Secondary 35J60, 34L30, 26D10.\\ 
This research was partially supported by Grant-in-Aid for Scientific Research (No.16K05189) 
and (No.15H03621).}
\end{abstract}

\section{ Introduction}

Let  $\Omega $ be  a  bounded domain of $\mathbb R^N \ (N\ge 1)$ whose boundary  $\partial\Omega$ is a $C^2$ compact manifolds.
In \cite{AHL} we  have established  $N$ dimensional  weighted Hardy's inequalities 
with weight function being  powers of the  distance function 
$\delta (x)={\rm dist}(x,\partial\Omega)$ to the   boundary $\partial\Omega$. 
In  this  paper we  shall study  a variational problem relating to these new inequalities. \par
We  prepare  more  notations to  describe  our  results. 
Let $1<p<\infty$ and $\alpha<1-1/p$. 
By $ L^p(\Omega,\delta^{\alpha p})$ 
we denote the space of Lebesgue measurable functions with weight $\delta^{\alpha p}$, 
for which 
\begin{equation} \label{L^p-norm} 
|| u ||_{ L^p(\Omega,\delta^{\alpha p})} 
= \left( \int_{\Omega}|u|^p \delta^{\alpha p}dx\right)^{\!1/p} < \infty. 
\end{equation} 
$W_{\alpha,0}^{1,p}(\Omega)$ is given by the  completion of $C_c^\infty(\Omega)$ 
with respect to the norm defined by
\begin{equation}\label{W-norm}
|| u||_{ W_{\alpha,0}^{1,p}(\Omega)} =
|| |\nabla u|  ||_{ L^p(\Omega, \delta ^{\alpha p})}. 
\end{equation}
Then $W^{1,p}_{\alpha,0}(\Omega) $  becomes a Banach space  
with the norm $|| \cdot||_{ W_{\alpha,0}^{1,p}(\Omega)}$. 
Under these preparation 
we recall  the  noncritical weighted Hardy inequality in \cite{AHL}. 
In particular,  we   have the simplest one:
\begin{equation}\label{HI}
\int_\Omega  |\nabla u|^p \delta^{\alpha p}dx 
\ge \mu \int_\Omega |u|^p \delta ^{(\alpha -1)p}dx 
\quad \text{for} \ \ u\in W^{1,p}_{\alpha,0}(\Omega),
\end{equation}
where $\mu$ is a positive  constant independent of $u$. 
If  $\alpha=0$ and  $p=2$, then  (\ref{HI})  is  a well-known Hardy's inequality and  valid 
for  a bounded domain $\Omega $  of  $\mathbb R^N$ with Lipschitz boundary 
 (c.f. \cite{BM}, \cite{D}, \cite{KO}, \cite{MMP}).
If  $\Omega$ is  convex  and  $ \alpha=0$, then (\ref{HI}) with $\mu = (1-1/p)^p$ holds 
for arbitrary $1<p<\infty$ (see \cite{MMP}, \cite{MS}).

The best possible $\mu$ in (\ref{HI}) is given by the quantity
\begin{equation}
\inf_{u\in W_{\alpha,0}^{1,p}(\Omega),u\neq 0} 
\frac{\int_\Omega |\nabla u|^p \delta^{\alpha p}dx}{\int_\Omega |u|^p \delta^{(\alpha -1)p}dx},
\end{equation}
which depends on $p,\alpha$ and $\Omega$.

In this paper we consider the following variational problem 
\begin{equation}
J^\alpha_\lambda 
= \inf_{u\in W_{\alpha,0}^{1,p}(\Omega),u\neq 0} \chi^\alpha_\lambda (u)  \label{J}
\end{equation}
where $\lambda\in\mathbb R$ and 
\begin{equation} 
\chi^\alpha_\lambda (u)
=\frac{\int_\Omega |\nabla u|^p \delta^{\alpha p}dx 
-\lambda \int_\Omega|u|^p \delta^{\alpha p}dx}{\int_\Omega |u|^p \delta ^{(\alpha -1)p}dx}.\label{chi}
\end{equation}
Note that $J_0^\alpha$ gives the best constant in (\ref{HI}). 
Clearly, the function $\lambda\mapsto J^\alpha_\lambda$ is non-increasing on $\mathbb R$ 
and $ J^\alpha_\lambda\to -\infty$ as $\lambda\to \infty$.

\begin{rem}\label{rem1.1}
It is worthy to remark that (\ref{HI}) is never valid in the critical case that $\alpha\ge 1-1/p$.  Nevertheless, we have established in this case a variant of weighted Hardy's inequalities 
in \cite{AHL} (cf. \cite{H}). 
In a coming paper \cite{AH2},
we shall treat general  weighted Hardy's inequalities with  compact perturbations
and study relating variational problems including    the critical case that $\alpha\ge 1-1/p$.
\end{rem}

This paper is organized in the following way: 
The main result is described in Section 2. 
Section 3 is devoted to the proof of main result.

\section{Main results}

Our main result is the following.

\begin{thm}\label{Main} 
Assume  that  $\Omega$  is  a  bounded domain of  class $C^2$ in $\mathbb R^N$. 
Assume  that $1<p<\infty$  and  $\alpha<1-1/p$. 
Then  there exists a constant $\lambda^\ast \in \mathbb R$ such that:
\begin{enumerate}
\item If  $\lambda \le  \lambda^\ast$,  then $J^\alpha_\lambda = \Lambda_{\alpha,p}$. 
If  $\lambda >  \lambda^\ast$,  then $J^\a_\lambda < \Lambda_{\a,p}$.
\end{enumerate}
Here 
\begin{equation}\label{Lambda}
\Lambda _{\alpha,p} = \left( 1-\alpha-\frac 1p\right)^{\!p}. 
\end{equation}
Moreover, it holds that:
\begin{enumerate}
\item[2.] If $\lambda<\lambda^\ast$, 
then the infimum $J_\lambda^\alpha$ in (\ref{J}) is not attained.
\item[3.] If $\lambda>\lambda^\ast$, 
then the infimum $J_\lambda^\alpha$ in (\ref{J}) is attained.
\end{enumerate}
\end{thm}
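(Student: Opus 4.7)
The plan is to set
\[
\lambda^\ast := \sup\bigl\{ \lambda \in \mathbb R : J^\alpha_\lambda = \Lambda_{\alpha,p} \bigr\}
\]
and then verify in order: (a) $J^\alpha_\lambda \le \Lambda_{\alpha,p}$ for every $\lambda \in \mathbb R$; (b) $-\infty < \lambda^\ast < \infty$; (c) the three assertions of the theorem. For (a) I would construct a family of boundary-concentrated test functions $u_\varepsilon$: working in boundary normal coordinates and modelling $u_\varepsilon$ on a suitable truncation of the one-dimensional extremal $t \mapsto t^{\,1-\alpha - 1/p}$ near $\partial\Omega$, a direct calculation of the type carried out in \cite{AHL} yields
\[
\frac{\int_\Omega |\nabla u_\varepsilon|^p \delta^{\alpha p}dx}{\int_\Omega |u_\varepsilon|^p \delta^{(\alpha-1)p}dx} \to \Lambda_{\alpha,p}
\quad \text{and} \quad
\frac{\int_\Omega |u_\varepsilon|^p \delta^{\alpha p}dx}{\int_\Omega |u_\varepsilon|^p \delta^{(\alpha-1)p}dx} \to 0
\]
as $\varepsilon \to 0^+$; the second limit holds because $\delta^{\alpha p} = \delta^p \cdot \delta^{(\alpha-1)p}$ and $\delta^p \to 0$ on the concentration set. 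Hence $\chi^\alpha_\lambda(u_\varepsilon) \to \Lambda_{\alpha,p}$ for every $\lambda$, giving (a).

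For (b), finiteness from above is easy: for any fixed $u_0 \ne 0$ the map $\lambda \mapsto \chi^\alpha_\lambda(u_0)$ is affine with negative slope, so it eventually drops below $\Lambda_{\alpha,p}$, forcing $J^\alpha_\lambda < \Lambda_{\alpha,p}$. Finiteness from below is where \cite{AHL} plays its decisive role: the sharp missing-term inequality
\[
\int_\Omega |\nabla u|^p \delta^{\alpha p}dx \ge \Lambda_{\alpha,p} \int_\Omega |u|^p \delta^{(\alpha-1)p}dx - C\int_\Omega |u|^p \delta^{\alpha p}dx
\]
gives $\chi^\alpha_\lambda(u) \ge \Lambda_{\alpha,p}$ for every admissible $u$ once $\lambda \le -C$, so $J^\alpha_\lambda = \Lambda_{\alpha,p}$ in that range and $\lambda^\ast > -\infty$. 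Assertion 1 then follows from monotonicity of $\lambda \mapsto J^\alpha_\lambda$ combined with its continuity (automatic, since it is concave as the pointwise infimum of the affine-in-$\lambda$ functionals $\chi^\alpha_\lambda(u)$). Assertion 2 is a one-line contradiction: if a nonzero $u$ attained $J^\alpha_\lambda = \Lambda_{\alpha,p}$ with $\lambda < \lambda^\ast$, then any $\lambda' \in (\lambda, \lambda^\ast]$ would give $\chi^\alpha_{\lambda'}(u) < \chi^\alpha_\lambda(u) = \Lambda_{\alpha,p} = J^\alpha_{\lambda'}$, impossible.

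The substance of the proof lies in Assertion 3. For $\lambda > \lambda^\ast$ take a minimizing sequence $u_n$ normalized by $\int_\Omega |u_n|^p \delta^{(\alpha-1)p}dx = 1$. Since $\delta$ is bounded on $\Omega$, the weight comparison $\delta^{\alpha p} \le (\operatorname{diam}\Omega)^p\, \delta^{(\alpha-1)p}$ and the definition of $\chi^\alpha_\lambda$ produce a uniform bound on $\|u_n\|_{W^{1,p}_{\alpha,0}(\Omega)}$, so we may extract $u_n \rightharpoonup u$ weakly in $W^{1,p}_{\alpha,0}(\Omega)$ and a.e.\ in $\Omega$. The main obstacle is that the weight $\delta^{(\alpha-1)p}$ allows the normalizing mass to concentrate on $\partial\Omega$: if a fraction $\theta \in (0,1]$ of that mass escapes to the boundary, the sharpness of $\Lambda_{\alpha,p}$ in \cite{AHL} compels the concentrating part to contribute at least $\theta\,\Lambda_{\alpha,p}$ to $\liminf \chi^\alpha_\lambda(u_n)$ (the perturbation integral being negligible on the concentration scale, by the same weight comparison used above), while the non-concentrating piece contributes at least $(1-\theta) J^\alpha_\lambda$; since $J^\alpha_\lambda < \Lambda_{\alpha,p}$, the total strictly exceeds $J^\alpha_\lambda$ unless $\theta = 0$. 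Once concentration is excluded, compactness of the embedding $W^{1,p}_{\alpha,0}(\Omega) \hookrightarrow L^p(\Omega, \delta^{\alpha p})$ (a Rellich-type statement derivable from interior compactness together with the Hardy inequality) and weak lower semicontinuity of $u \mapsto \int |\nabla u|^p \delta^{\alpha p}dx$ force $u \ne 0$ to realize the infimum. Making the boundary concentration--compactness alternative rigorous in this weighted setting, and identifying $\Lambda_{\alpha,p}$ as the only possible ``concentration loss'', is the technical heart of the proof.
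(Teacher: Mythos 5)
Your proposal is correct and follows essentially the same route as the paper: boundary-concentrated test functions built from the one-dimensional near-extremal profile for the upper bound $J^\alpha_\lambda\le\Lambda_{\alpha,p}$, the sharp local Hardy inequality near $\partial\Omega$ from \cite{AHL} for the lower bound (hence $\lambda^\ast>-\infty$), monotonicity plus continuity of $\lambda\mapsto J^\alpha_\lambda$ for assertion 1, the same two-parameter contradiction for assertion 2, and for assertion 3 the same dichotomy in which boundary concentration costs at least $\Lambda_{\alpha,p}$ per unit of escaping mass while the compact embedding $W^{1,p}_{\alpha,0}(\Omega)\hookrightarrow L^p(\Omega,\delta^{\alpha p})$ handles the remainder. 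The only cosmetic differences are that the paper proves Lipschitz continuity of $J^\alpha_\lambda$ directly rather than via concavity, and organizes the concentration--compactness alternative as two explicit propositions.
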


\begin{rem}\label{rem2.1}
\begin{enumerate}
\item In Theorem \ref{Main}, it remains for $\lambda=\lambda^\ast$ of the open problem 
whether the infimum $J_{\lambda^\ast}^\alpha$ in (\ref{J}) is attained or not.

\item For the case of $\alpha=0$ and $p=2$, it is shown that 
the infimum $J_\lambda^0$ in (\ref{J}) is attained if and only if $\lambda>\lambda^\ast$. 
See \cite{BM}.

\item For the case of $\alpha=0$ and $\lambda=0$, 
the value of the infimum $J_0^0$ in (\ref{J}) and its attainability are studied in \cite{MMP}.

\item In the assertion 3 of Theorem \ref{Main}, if $\lambda>\lambda^\ast$ then 
the minimizer $u$ for the variational problem (\ref{J}) is a non-trivial weak solution 
of the following Euler-Lagrange equation:
\begin{equation*}
-{\rm div}(\delta^{\alpha p}|\nabla u|^{p-2}\nabla u)-\lambda \delta^{\alpha p}|u|^{p-2}u
=J_\lambda^\alpha \delta^{(\alpha-1)p} |u|^{p-2}u \quad \text{in} \ \ {\cal D}'(\Omega).
\end{equation*}
\end{enumerate}
\end{rem}

\begin{cor}\label{cor2.1}
Under the  same assumptions  as in Theorem \ref{Main}, 
there exists a constant $\lambda \in \mathbb R$ 
such  that for $u\in W^{1,p}_{\alpha,0}(\Omega)$
\begin{equation}\label{HI1}
\int_\Omega  |\nabla u|^p \delta^{\alpha p}dx 
\ge \Lambda_{\alpha,p} \int_\Omega |u|^p \delta ^{(\alpha -1)p}dx 
+ \lambda \int_\Omega|u|^p \delta^{\alpha p}dx.
\end{equation}
\end{cor}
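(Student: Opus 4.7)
The plan is to derive Corollary \ref{cor2.1} as a direct translation of the first assertion of Theorem \ref{Main}, with no new analytic work required. Concretely, I would take $\lambda = \lambda^\ast$ (any smaller value would work equally well, but this gives the sharpest constant) and simply unfold the definitions of $J_\lambda^\alpha$ and $\chi_\lambda^\alpha$.

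First, for $u \equiv 0$ the inequality \eqref{HI1} is trivial since all three integrals vanish. So it suffices to consider $u \in W^{1,p}_{\alpha,0}(\Omega)$ with $u \neq 0$. For such $u$, the denominator $\int_\Omega |u|^p \delta^{(\alpha-1)p}\,dx$ is strictly positive (and finite by the Hardy inequality \eqref{HI}, which applies since $\alpha < 1 - 1/p$). Hence we can form the quotient $\chi_{\lambda^\ast}^\alpha(u)$, and by the definition of the infimum $J_{\lambda^\ast}^\alpha$ in \eqref{J} we have
\begin{equation*}
\chi_{\lambda^\ast}^\alpha(u) \;\geq\; J_{\lambda^\ast}^\alpha .
\end{equation*}

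The next step is to invoke assertion 1 of Theorem \ref{Main}: taking $\lambda = \lambda^\ast$ (which satisfies $\lambda \leq \lambda^\ast$) yields $J_{\lambda^\ast}^\alpha = \Lambda_{\alpha,p}$. Substituting the definition \eqref{chi} of $\chi_{\lambda^\ast}^\alpha(u)$ into the inequality above and multiplying through by the positive denominator gives exactly \eqref{HI1} with $\lambda = \lambda^\ast$.

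There is no real obstacle here; the only subtlety worth stating is that the corollary is the reformulation of ``$J_\lambda^\alpha \geq \Lambda_{\alpha,p}$'' as a pointwise-in-$u$ functional inequality, and that the existence of a $\lambda$ for which this reformulation is valid is exactly what Theorem \ref{Main}(1) provides. If one wished, one could add a remark that the constant $\lambda^\ast$ is in fact the largest $\lambda$ for which \eqref{HI1} holds with constant $\Lambda_{\alpha,p}$, since for $\lambda > \lambda^\ast$ the theorem gives $J_\lambda^\alpha < \Lambda_{\alpha,p}$ and the inequality must fail on some minimizing sequence.
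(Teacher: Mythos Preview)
Your proposal is correct and matches the paper's approach: Corollary~\ref{cor2.1} is stated immediately after Theorem~\ref{Main} without a separate proof, so it is meant to be read exactly as you do, namely as the pointwise-in-$u$ restatement of $J_{\lambda}^{\alpha}\ge\Lambda_{\alpha,p}$ for some $\lambda$ (e.g.\ $\lambda=\lambda^\ast$), obtained by unfolding the definitions of $J_\lambda^\alpha$ and $\chi_\lambda^\alpha$. Your added remark that $\lambda^\ast$ is the largest admissible constant is also correct and consistent with assertion~1.
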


\medskip

For each  small $\eta>0$, 
by $\Omega_\eta $ we denote a tubular neighborhood of $\partial \Omega$;
\begin{equation}\label{t-nbd}
\Omega_\eta = \{ x\in \Omega:  \delta(x)= {\rm dist}(x,\partial\Omega)<\eta \}. 
\end{equation}
Then we have the following inequality of Hardy type which  is crucial 
in the proof of Theorem \ref{Main}.

\begin{thm}\label{thm2.2}
Assume  that  $\Omega$  is  a  bounded domain of  class $C^2$ in $\mathbb R^N$. 
Assume  that $1<p<\infty$  and  $ \a<1-1/p$. 
Assume  that  $\eta $  is  a sufficienty  small positive  number. 
Then we have that for $u\in W^{1,p}_{\alpha,0}(\Omega)$
\begin{equation}\label{HI2}
\int_{\Omega_\eta}  |\nabla u|^p \delta^{\alpha p} dx
\ge \Lambda_{\alpha,p} \int_{\Omega_\eta}  |u|^p \delta ^{(\alpha -1)p}dx,
\end{equation}
where $\Lambda_{\alpha,p}$ is defined by (\ref{Lambda}).
\end{thm}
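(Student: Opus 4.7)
The plan is to reduce \eqref{HI2} to a one-dimensional weighted Hardy inequality along the normal rays emanating from $\partial\Omega$ and then integrate over $\partial\Omega$. By density of $C_c^\infty(\Omega)$ in $W^{1,p}_{\alpha,0}(\Omega)$, it suffices to prove \eqref{HI2} for $u\in C_c^\infty(\Omega)$, which automatically vanishes near $\partial\Omega$. Since $\partial\Omega$ is a compact manifold of class $C^2$, there exists $\eta_0>0$ such that the Fermi map $\Phi(y,t)=y+t\,\nu(y)$, with $\nu(y)$ the inward unit normal, is a $C^1$-diffeomorphism from $\partial\Omega\times(0,\eta_0)$ onto $\Omega_{\eta_0}$. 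Writing $\tilde u=u\circ\Phi$, one has $\delta(\Phi(y,t))=t$ and $dx=J(y,t)\,dS(y)\,dt$, where $J$ is continuous and $J(y,0)=1$; below I fix $0<\eta<\eta_0$.

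Because Fermi coordinates are orthogonal at the boundary, the Euclidean metric is block diagonal with $g_{tt}=1$, and hence $|\nabla u|^p\ge|\partial_t\tilde u|^p$. Consequently
\begin{equation*}
\int_{\Omega_\eta}|\nabla u|^p\delta^{\alpha p}\,dx \;\ge\;
\int_{\partial\Omega}\!\!\int_0^\eta|\partial_t\tilde u(y,t)|^p\,t^{\alpha p}J(y,t)\,dt\,dS(y),
\end{equation*}
and $\tilde u(y,0)=0$ for every $y\in\partial\Omega$. The heart of the argument is, for each fixed $y$, the one-dimensional weighted Hardy inequality
\begin{equation*}
\int_0^\eta|v'(t)|^p\,t^{\alpha p}J(y,t)\,dt \;\ge\; \Lambda_{\alpha,p}\int_0^\eta|v(t)|^p\,t^{(\alpha-1)p}J(y,t)\,dt
\end{equation*}
for every $v\in C^1([0,\eta])$ with $v(0)=0$. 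Granting this, integration in $y$ together with $J\,dt\,dS=dx$ yields \eqref{HI2} at once.

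The main obstacle is establishing the one-dimensional inequality with the \emph{exact} constant $\Lambda_{\alpha,p}=(1-\alpha-1/p)^p$ rather than a slightly smaller one. Setting $A=(\alpha-1)p+1<0$ (which is where the hypothesis $\alpha<1-1/p$ enters), a direct integration by parts based on $t^{(\alpha-1)p}=A^{-1}(t^A)'$ followed by H\"older's inequality reproduces the sharp constant when $J\equiv 1$, but for variable $J$ it leaves an error term $|A|^{-1}\int_0^\eta|v|^pt^AJ_t\,dt$ of size $O(\eta)\int_0^\eta|v|^pt^{(\alpha-1)p}J\,dt$, which costs the sharp constant. To recover it I would use the ground-state substitution $v(t)=t^{s}w(t)$ with $s=1-\alpha-1/p$ (so that $\Lambda_{\alpha,p}=s^p$ and $w(0)=0$), apply the convexity inequality $|a+b|^p\ge|a|^p+p|a|^{p-2}ab$ to $v'=st^{s-1}w+t^sw'$, and exploit the algebraic identities $(s+\alpha-1)p=-1$ and $(s+\alpha)p-p+1=0$ to collapse the leading term to exactly $\Lambda_{\alpha,p}\int_0^\eta|v|^pt^{(\alpha-1)p}J\,dt$. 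The cross term, integrated by parts using $w(0)=0$, produces the non-negative boundary contribution $s^{p-1}|w(\eta)|^pJ(y,\eta)$ together with a remainder $-s^{p-1}\int_0^\eta|w|^pJ_t\,dt$; using $|J_t|\le C$ and the Poincar\'e-type bound $\int_0^\eta|w|^p\,dt\le(\eta^p/p)\int_0^\eta|w'|^p\,dt$, this remainder is of lower order and, for $\eta$ sufficiently small, is absorbed by the positive contributions. Density then extends \eqref{HI2} to all $u\in W^{1,p}_{\alpha,0}(\Omega)$.
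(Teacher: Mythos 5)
First, a point of reference: this paper does not actually prove Theorem \ref{thm2.2}; it is imported from \cite{AHL} (``For the proofs of Theorem \ref{thm2.2}, \dots see in \cite{AHL}''). So your argument has to stand on its own. Its skeleton is the right and standard one: Fermi coordinates, the pointwise bound $|\nabla u|\ge|\partial_t\tilde u|$ (valid since $\partial_t\tilde u=\nabla u\cdot\nabla\delta$ and $|\nabla\delta|=1$), the factorization $dx=J\,dS\,dt$, and the ground-state substitution $v=t^sw$, $s=1-\alpha-1/p$. Your exponent bookkeeping is also correct: the leading term collapses to exactly $\Lambda_{\alpha,p}\int_0^\eta|v|^pt^{(\alpha-1)p}J\,dt$ and the cross term to $s^{p-1}\int_0^\eta(|w|^p)'J\,dt$.

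The gap is in the final absorption of $-s^{p-1}\int_0^\eta|w|^pJ_t\,dt$. Having used the convexity inequality in the form $|a+b|^p\ge|a|^p+p|a|^{p-2}ab$, you have thrown away $|b|^p=t^{sp}|w'|^p$, so the only nonnegative quantities remaining are the sharp leading term and the boundary term $s^{p-1}|w(\eta)|^pJ(y,\eta)$. Neither can absorb the Jacobian error: spending any fixed fraction of the leading term destroys the sharp constant (the crude bound $|J_t|\le CJ$ only gives $\Lambda_{\alpha,p}-C\eta$), and the boundary term vanishes whenever $w$ is supported in $(0,\eta/2)$, while $\int_0^\eta|w|^pJ_t\,dt$ can be strictly positive there (near a concave piece of $\partial\Omega$ one has $J_t>0$). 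So the proof as written does not close. The repair is to \emph{retain} the discarded term: for $p\ge2$ use $|a+b|^p\ge|a|^p+p|a|^{p-2}ab+c_p|b|^p$, which keeps $c_p\int_0^\eta t^{p-1}|w'|^pJ\,dt$ (note $t^{sp+\alpha p}=t^{p-1}$), and then control the error by the \emph{weighted} one-dimensional inequality $\int_0^\eta|w|^p\,dt\le p^p\,\eta\int_0^\eta t^{p-1}|w'|^p\,dt$ rather than by your unweighted Poincar\'e bound --- the latter produces $\int_0^\eta|w'|^p\,dt$, which is not dominated by the retained $\int_0^\eta t^{p-1}|w'|^p\,dt$ near $t=0$ and so cannot be absorbed either. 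For $1<p<2$ the pointwise inequality with an additive $c_p|b|^p$ term fails, and an additional case analysis (e.g.\ splitting according to whether $|b|\le|a|$) is required; your sketch does not address this range.
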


In \cite{AHL} we have more precise estimate than (\ref{HI2}).

\begin{cor}\label{cor2.2}
Under the  same assumptions  as in Theorem \ref{thm2.2}, 
there exists a positive constant $\gamma$ such  that for $u\in W^{1,p}_{\alpha,0}(\Omega)$
\begin{equation}\label{HI3}
\int_\Omega  |\nabla u|^p \delta^{\alpha p}dx 
\ge \gamma \int_\Omega |u|^p \delta ^{(\alpha -1)p}dx.
\end{equation}
\end{cor}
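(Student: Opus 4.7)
The plan is to combine Theorem~\ref{thm2.2} on the tubular neighborhood $\Omega_\eta$ with a classical Poincar\'e inequality in the interior, bridged by a smooth cutoff of the distance function. By density it suffices to argue for $u \in C_c^\infty(\Omega)$.

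Fix $\eta > 0$ small enough for (\ref{HI2}) to apply, and decompose
\[
\int_\Omega |u|^p \delta^{(\alpha-1)p}\,dx \;=\; \int_{\Omega_\eta} |u|^p \delta^{(\alpha-1)p}\,dx \;+\; \int_{\Omega \setminus \Omega_\eta} |u|^p \delta^{(\alpha-1)p}\,dx.
\]
Theorem~\ref{thm2.2} bounds the first summand by $\Lambda_{\alpha,p}^{-1} \int_\Omega |\nabla u|^p \delta^{\alpha p}\,dx$ at once. On the interior piece $\Omega \setminus \Omega_\eta$, the distance $\delta(x)$ is pinched in $[\eta, \operatorname{diam}\Omega]$, hence $\delta^{(\alpha-1)p}$ is bounded above, and the remaining task collapses to establishing an estimate of the form $\int_{\Omega \setminus \Omega_\eta} |u|^p\,dx \le C \int_\Omega |\nabla u|^p \delta^{\alpha p}\,dx$ with $C = C(\eta,\Omega,\alpha,p)$.

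To obtain this interior bound I would introduce $\chi(x) = \psi(\delta(x))$ with $\psi \in C^\infty([0,\infty);[0,1])$, $\psi \equiv 0$ on $[0,\eta/2]$, $\psi \equiv 1$ on $[\eta,\infty)$, and $|\psi'| \le C\eta^{-1}$; then $v := \chi u$ vanishes in $\Omega_{\eta/2}$, so $v \in W^{1,p}_0(\Omega)$ and the unweighted Poincar\'e inequality yields $\int_\Omega |v|^p\,dx \le C_P \int_\Omega |\nabla v|^p\,dx$. Expanding $\nabla v = \chi\nabla u + u\nabla\chi$ produces two contributions: the $\chi\nabla u$ term is supported where $\delta \ge \eta/2$, on which $\delta^{\alpha p}$ is bounded below by a positive constant, so it is dominated by a multiple of $\int_\Omega |\nabla u|^p \delta^{\alpha p}\,dx$; the $u\nabla\chi$ term lives on the shell $\Omega_\eta \setminus \Omega_{\eta/2}$, where $\delta \asymp \eta$ makes $|u|^p$ and $|u|^p \delta^{(\alpha-1)p}$ comparable, so Theorem~\ref{thm2.2} again controls it by $\int_\Omega |\nabla u|^p \delta^{\alpha p}\,dx$. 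Since $|v|=|u|$ on $\Omega \setminus \Omega_\eta$, these two bounds close the loop.

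There is no serious analytic obstacle in the argument; once Theorem~\ref{thm2.2} is in hand, the rest is routine cutoff-and-Poincar\'e bookkeeping. The only point worth flagging is that the constant $\gamma$ produced this way depends on $\eta$ and is very far from sharp, in contrast with the sharp constant $\Lambda_{\alpha,p}$ appearing in the boundary inequality (\ref{HI2}); accordingly the content of this corollary should be read as purely qualitative.
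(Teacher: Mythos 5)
Your argument is correct, and it is worth noting that the paper itself offers no proof of Corollary~\ref{cor2.2}: it is deferred to \cite{AHL}, together with Theorem~\ref{thm2.2} and Theorem~\ref{thm2.3}. The route the paper implicitly intends is Theorem~\ref{thm2.2} combined with the implication $2\Rightarrow 1$ of Theorem~\ref{thm2.3}, and your cutoff-and-Poincar\'e argument is in substance a direct proof of exactly that implication, specialized to $\kappa=\Lambda_{\alpha,p}$. All the individual steps check out: since $(\alpha-1)p<0$, the weight $\delta^{(\alpha-1)p}$ is indeed bounded above on $\Omega\setminus\Omega_\eta$; on $\{\delta\ge\eta/2\}$ the weight $\delta^{\alpha p}$ is bounded below by a positive constant whatever the sign of $\alpha p$ (using $\delta\le\sup_\Omega\delta$ when $\alpha p<0$); the function $\psi\circ\delta$ is Lipschitz because $\delta$ is $1$-Lipschitz (you do not need $\delta$ to be $C^2$ away from the boundary, where it generally is not), so $v=\chi u\in W^{1,p}_0(\Omega)$ for $u\in C_c^\infty(\Omega)$ and the unweighted Friedrichs--Poincar\'e inequality applies; and on the shell $\{\eta/2\le\delta\le\eta\}$ the comparability $|u|^p\le \eta^{(1-\alpha)p}|u|^p\delta^{(\alpha-1)p}$ lets Theorem~\ref{thm2.2} absorb the commutator term $u\nabla\chi$. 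The concluding density remark is the standard one: since (\ref{HI3}) for smooth compactly supported functions shows that the map $u\mapsto\|u\|_{L^p(\Omega,\delta^{(\alpha-1)p})}$ is continuous for the norm (\ref{W-norm}), the inequality extends to the completion $W^{1,p}_{\alpha,0}(\Omega)$. Your closing caveat is also apt: the constant obtained is far from $\Lambda_{\alpha,p}$, consistent with the fact that the sharp global constant is the object of the variational problem (\ref{J}) studied in the rest of the paper.
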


For  any  bounded  domain $\Omega \subset \mathbb R^N$ we can prove  the following:

\begin{thm}\label{thm2.3}
Assume  that  $\Omega$  is  a  bounded domain of   $\mathbb R^N$.
Assume  that $1<p<\infty$  and  $ \alpha<1-1/p$.  
Then the followings are equivalent  with each other.
\begin{enumerate}
\item  There exists a  positive  number  $\gamma$  such  that the  inequality (\ref{HI3}) is valid for  every  $u\in W^{1,p}_{\alpha,0}(\Omega)$.

\item  For a sufficiently small positive number $\eta$,
there exists a positive number $\kappa$ such that the  inequality (\ref{HI2})  with  $\Lambda_{\alpha,p}$ replaced by $\kappa$ is valid for every  $u\in W^{1,p}_{\alpha,0}(\Omega)$.
\end{enumerate} 
\end{thm}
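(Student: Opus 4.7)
The proof treats the two implications separately, relying throughout on the dichotomy between the boundary layer $\Omega_\eta$ (where the elementary bound $\delta^{\alpha p} \le \eta^p \delta^{(\alpha-1)p}$ allows cutoff cross-terms to be absorbed as multiples of $\int|u|^p\delta^{(\alpha-1)p}$) and the interior region $\Omega\setminus\Omega_{\eta/2}$ (where both weights $\delta^{\alpha p}$ and $\delta^{(\alpha-1)p}$ are bounded above and below by positive constants depending only on $\eta$ and $\mathrm{diam}(\Omega)$).

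For the direction (2) $\Rightarrow$ (1), which generalizes Corollary \ref{cor2.2}, the plan is to proceed by direct decomposition. Given $u\in W^{1,p}_{\alpha,0}(\Omega)$, I split $\int_\Omega|u|^p\delta^{(\alpha-1)p}=\int_{\Omega_\eta}+\int_{\Omega\setminus\Omega_\eta}$; the first summand is dominated by $\kappa^{-1}\int_{\Omega_\eta}|\nabla u|^p\delta^{\alpha p}$ via (2). For the second, introduce a smooth cutoff $\psi$ with $\psi\equiv 1$ on $\Omega\setminus\Omega_\eta$ and $\psi\equiv 0$ on $\Omega_{\eta/2}$, so $\psi u\in W^{1,p}_0(\Omega)$. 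The classical unweighted Poincar\'e inequality on $\Omega$ gives $\int_\Omega|\psi u|^p\le C\int_\Omega|\nabla(\psi u)|^p$; expanding $|\nabla(\psi u)|^p\le 2^{p-1}(\psi^p|\nabla u|^p+u^p|\nabla\psi|^p)$ and using that the weights are bounded on $\mathrm{supp}(\psi)$ and on $\mathrm{supp}(\nabla\psi)$ yields
\begin{equation*}
\int_{\Omega\setminus\Omega_\eta}|u|^p\delta^{(\alpha-1)p}\le C_1\int_\Omega|\nabla u|^p\delta^{\alpha p}+C_2\int_{\Omega_\eta}|u|^p\delta^{(\alpha-1)p},
\end{equation*}
to which one applies (2) one more time to obtain (\ref{HI3}).

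For the direction (1) $\Rightarrow$ (2), the plan is to apply (\ref{HI3}) to the cutoff $\phi u$, where $\phi\equiv 1$ on $\Omega_{\eta/2}$ and $\phi\equiv 0$ outside $\Omega_\eta$. Expanding and using $|\nabla\phi|^p\delta^{\alpha p}\le C\delta^{(\alpha-1)p}$ on the transition annulus $A=\Omega_\eta\setminus\Omega_{\eta/2}$ yields the preliminary inequality
\begin{equation*}
\int_{\Omega_\eta}|\nabla u|^p\delta^{\alpha p}+C\int_A|u|^p\delta^{(\alpha-1)p}\ge c\int_{\Omega_{\eta/2}}|u|^p\delta^{(\alpha-1)p}.
\end{equation*}
To transform this into (2), the annulus error term $\int_A|u|^p\delta^{(\alpha-1)p}$ must be absorbed. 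I would accomplish this by a contradiction/compactness argument: if (2) failed for every $\kappa>0$, a normalized sequence $u_n\in W^{1,p}_{\alpha,0}(\Omega)$ with $\int_{\Omega_\eta}|u_n|^p\delta^{(\alpha-1)p}=1$ and $\int_{\Omega_\eta}|\nabla u_n|^p\delta^{\alpha p}\to 0$ could be extracted, and after truncation by a cutoff one may assume $\mathrm{supp}(u_n)\subset\Omega_{2\eta}$. Inequality (\ref{HI3}) then forces the collar $\Omega_{2\eta}\setminus\Omega_\eta$ to carry gradient energy bounded below by a fixed multiple of $\gamma$; the weights being bounded there permits extraction (via Rellich-Kondrachov) of a strongly $L^p$-convergent subsequence on the collar, and the vanishing of the $\Omega_\eta$-gradient together with the zero trace on $\partial\Omega$ forces the weighted $L^p$-mass on $\Omega_\eta$ of the limit to vanish, contradicting the normalization.

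The main obstacle is the compactness argument in (1) $\Rightarrow$ (2): one needs to identify the limiting behavior of $u_n$ not only on the collar $\Omega_{2\eta}\setminus\Omega_\eta$ (where Rellich-Kondrachov applies directly) but also on $\Omega_\eta$ itself, where the weight $\delta^{(\alpha-1)p}$ is singular. This requires showing that normalized minimizing sequences for the failed (2) converge strongly to zero in $L^p(\Omega_\eta,\delta^{(\alpha-1)p})$ --- essentially a compactness property of the embedding $W^{1,p}_{\alpha,0}(\Omega)\hookrightarrow L^p(\Omega,\delta^{(\alpha-1)p})$ that must be extracted from (\ref{HI3}) together with the strict subcriticality condition $\alpha<1-1/p$.
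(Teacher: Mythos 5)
The paper itself does not prove Theorem \ref{thm2.3} --- it defers the proof to \cite{AHL} --- so your argument can only be assessed on its own merits. Your direction (2) $\Rightarrow$ (1) is correct and is the natural argument: the boundary layer is handled by hypothesis (2), the interior by the unweighted Poincar\'e inequality for $\psi u\in W^{1,p}_0(\Omega)$ (valid on any bounded domain), and the commutator term $u\nabla\psi$ lives on $\Omega_\eta\setminus\Omega_{\eta/2}$, where both weights are comparable to positive constants, so it is absorbed by one further application of (2).

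The direction (1) $\Rightarrow$ (2) has a genuine gap, which you yourself flag in your closing paragraph. First, a technical point: truncating \emph{outward} to $\Omega_{2\eta}$ does not produce a bounded sequence, because the hypotheses control $u_n$ only on $\Omega_\eta$ and the term $u_n\nabla\phi$ on the collar $\Omega_{2\eta}\setminus\Omega_\eta$ is completely uncontrolled; Rellich--Kondrachov cannot even be invoked there. (Truncating \emph{inward}, with the transition on $A=\Omega_\eta\setminus\Omega_{\eta/2}$, does give a sequence bounded in $W^{1,p}_{\alpha,0}(\Omega)$, since $|\nabla\phi|^p\delta^{\alpha p}\le C\delta^{(\alpha-1)p}$ on $A$ and the normalization bounds $\int_{\Omega_\eta}|u_n|^p\delta^{(\alpha-1)p}dx$.) The fatal problem is the step you identify at the end: you need the normalized sequence to converge \emph{strongly} to $0$ in $L^p(\Omega_\eta,\delta^{(\alpha-1)p})$, i.e.\ a compactness property of the embedding $W^{1,p}_{\alpha,0}(\Omega)\hookrightarrow L^p(\Omega,\delta^{(\alpha-1)p})$. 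That embedding is provably \emph{not} compact: if it were, the concentration alternative of Proposition \ref{concentration} could never occur and $J^\alpha_\lambda$ would be attained for every $\lambda$, contradicting assertion 2 of Theorem \ref{Main}. Mass can escape to $\partial\Omega$, and compactness on interior compacta or on the collar says nothing about it; so the contradiction with the normalization is never reached.

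Moreover, no argument can close this gap from the stated hypotheses alone, because the implication (1) $\Rightarrow$ (2) uses more than ``$\Omega$ bounded''. Take $\Omega=B(0,1)\setminus\{0\}\subset\mathbb R^3$, $p=2$, $\alpha=0$, so that $\delta(x)=\min(|x|,1-|x|)$. Then (\ref{HI3}) holds with $\gamma=1/8$, by combining the classical Hardy inequality $\int |v|^2|x|^{-2}dx\le 4\int|\nabla v|^2dx$ at the puncture with the sharp boundary Hardy inequality of the convex ball at the sphere. Yet (2) fails for every $\eta$: since a point has zero $H^1$-capacity in $\mathbb R^3$, the function equal to $1$ on $B(0,1/2)$ and vanishing near the sphere lies in $W^{1,2}_{0,0}(\Omega)=H^1_0(B)$, has vanishing gradient on the component of $\Omega_\eta$ near the origin, and still satisfies $\int|u|^2\delta^{-2}dx>0$ there. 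A correct proof must therefore invoke boundary regularity. For a $C^2$ boundary the missing ingredient is not compactness but the trace-type bound $\int_{A}|u|^p\delta^{(\alpha-1)p}dx\le C\int_{\Omega_\eta}|\nabla u|^p\delta^{\alpha p}dx$, obtained by integrating $\partial u/\partial n$ along the normal ray from the nearest boundary point (on which $\delta$ equals the arclength parameter) and applying H\"older with weight $t^{\alpha}$; the condition $\alpha<1-1/p$ makes $\int_0^{\delta}t^{-\alpha p/(p-1)}dt$ finite. Inserted into your preliminary inequality, this absorbs the annulus term outright and yields (2) with no limiting argument at all.
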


\medskip

For the proofs of Theorem \ref{thm2.2}, Corollary \ref{cor2.2} and Theorem \ref{thm2.3}, 
see in \cite{AHL}.

\section{Proof of Theorem {\ref{Main}}}

In this section, we give the proof of Theorem \ref{Main}.

\subsection{Upper bound of  $J_\lambda^\alpha$}

First, we prove the assertion 1 of Theorem \ref{Main}.

\begin{lem}\label{lem3.1}
Let $1<p<\infty$ and $\alpha<1-1/p$. 
For any $\varepsilon>0$ and any $\eta>0$ 
there exists a function $h\in W^{1,p}_{\alpha, 0}((0,\eta))$ such that 
\begin{equation}\label{3.1}
\int_0^\eta |h'(t)|^p t^{\alpha p}dt 
\le (\Lambda_{\alpha,p}+\varepsilon)\int_0^\eta |h(t)|^pt^{(\alpha-1)p}dt, 
\end{equation}
where $\Lambda_{\alpha,p}$ is defined by (\ref{Lambda}).
\end{lem}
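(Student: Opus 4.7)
My plan is to exhibit an explicit near-minimizer of the one-dimensional weighted Rayleigh quotient. The formal minimizer would be the critical power $t^{1-\alpha-1/p}$, for which both sides of \eqref{3.1} diverge logarithmically at $t=0$ and the quotient equals exactly $\Lambda_{\alpha,p}$. To produce an admissible competitor in $W^{1,p}_{\alpha,0}((0,\eta))$, I take a slightly supercritical power and truncate it near both endpoints.

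Concretely, fix $\tau>0$ small and set $\beta=1-\alpha-1/p+\tau$. Since $\alpha<1-1/p$, we have $\beta>0$, so $t^\beta$ is smooth on $(0,\eta]$ and vanishes at $t=0$. Choose a cutoff $\phi_n\in C_c^\infty((0,\eta))$ with $\phi_n\equiv 1$ on $[1/n,\eta/2]$, $\phi_n\equiv 0$ outside $[1/(2n),3\eta/4]$, with $|\phi_n'|\le Cn$ on the left transition region $[1/(2n),1/n]$ and $|\phi_n'|\le C(\eta)$ on the right. Take $h_n(t):=t^\beta\phi_n(t)\in C_c^\infty((0,\eta))$.

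For the denominator of the Rayleigh quotient,
\begin{equation*}
\int_0^\eta |h_n|^p t^{(\alpha-1)p}\,dt = \int_0^\eta t^{p\tau-1}\phi_n^p\,dt,
\end{equation*}
which for fixed $\tau$ tends, as $n\to\infty$, to $(\eta/2)^{p\tau}/(p\tau)$ plus a bounded right-end term, and this main part blows up like $1/(p\tau)$ as $\tau\to 0^+$. For the numerator, on $\{\phi_n\equiv 1\}=[1/n,\eta/2]$ the integrand is exactly $\beta^p t^{p\tau-1}$, so it delivers $\beta^p$ times the main part of the denominator. The cutoff contributions split into two: near $\eta/2$ all weights are bounded, so the contribution is $O(1)$ uniformly in $n$ and $\tau$; near $t=0$, power counting gives a bound of order $n^{p(1-\beta)}\cdot n^{-\alpha p}\cdot n^{-1}=n^{-p\tau}\to 0$ as $n\to\infty$, since $\alpha+\beta=1-1/p+\tau$. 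Thus as $n\to\infty$ the Rayleigh quotient of $h_n$ converges to $\beta^p(1+O(\tau))$, which approaches $\Lambda_{\alpha,p}$ as $\tau\to 0^+$. Choosing $\tau$ small so that $\beta^p<\Lambda_{\alpha,p}+\varepsilon/2$ and then $n$ large enough proves \eqref{3.1}.

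The main obstacle is estimating the numerator cleanly on the left transition region: the crude bound $|h_n'|^p\le 2^{p-1}(\beta^p t^{(\beta-1)p}\phi_n^p+t^{\beta p}|\phi_n'|^p)$ would lose a factor of $2^{p-1}$ in the leading term and prevent the quotient from approaching $\Lambda_{\alpha,p}$. The fix is to split the integral according to whether $\phi_n\equiv 1$ (where the computation is exact and delivers the clean factor $\beta^p$) or the cutoff is transitioning (where the crude bound is acceptable because the power count forces the contribution to be $o(1)$). The choice $\beta>1-\alpha-1/p$ is precisely what makes this left cutoff error vanish, and the right cutoff error becomes negligible relative to the diverging denominator as $\tau\to 0^+$.
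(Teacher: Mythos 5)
Your proposal is correct and rests on the same mechanism as the paper's proof: test functions built from the slightly supercritical power $t^{\beta}$ with $\beta=1-\alpha-1/p+\tau$, whose weighted Rayleigh quotient tends to $\beta^{p}\to\Lambda_{\alpha,p}$ because the denominator diverges like $1/(p\tau)$ while the outer cutoff contributes only $O(1)$. The only difference is cosmetic: the paper glues $t^{\beta}$ to a linear cap and computes both integrals exactly, whereas you use a smooth cutoff with an extra (harmless, in fact unnecessary, since $t^\beta$ is already admissible near $0$ for $\beta>1-\alpha-1/p$) inner truncation at scale $1/n$ and argue by power counting.
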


\begin{proof}
Since the inequality (\ref{3.1}) is invariant with respect to scaling, 
we may assume that $\eta=2$. 
Put 
\begin{equation*}
h(t)= \begin{cases} \ \ t^\beta & \text{if} \quad t\in (0,1),\\ 2-t & \text{if} \quad t\in [1,2) \end{cases}  
\end{equation*}
with $\beta>1-\alpha-1/p$. Then we see that $h\in W^{1,p}_{\alpha,0}((0,2))$,
\begin{equation}\label{3.2}
\int_0^2  |h'(t)|^p t^{\alpha p}dt = \frac{\beta^p}{p(\beta-1+\alpha+1/p)} +C_{\alpha,p}  
\end{equation}
and 
\begin{equation}\label{3.3}
\int_0^2 |h(t)|^pt^{(\alpha-1)p}dt = \frac{1}{p(\beta-1+\alpha+1/p)} +D_{\alpha,p},  
\end{equation}
where 
\begin{equation*}
C_{\alpha,p} = \int_1^2t^{\alpha p}dt \quad \text{and} \quad 
D_{\alpha,p} = \int_1^2 (2-t)^pt^{(\alpha-1)p}dt
\end{equation*}
are constants independent of $\beta$. 
It follows from (\ref{3.2}) and (\ref{3.3}) that  
\begin{equation*}
\frac{\int_0^2  |h'(t)|^p t^{\alpha p}dt}{\int_0^2 |h(t)|^pt^{(\alpha-1)p}dt} 
\longrightarrow \Lambda_{\alpha,p} 
\quad \text{as} \ \ \beta \to 1-\alpha-\frac{1}{p}+0,
\end{equation*}
which implies (\ref{3.1}) with $\eta=2$. Therefore we obtain the desired conclusion.
\end{proof}

\begin{lem}\label{lem3.2}
Let $\Omega$ be  a  bounded domain of  class $C^2$ in $\mathbb R^N$. 
Let $1<p<\infty$ and $\alpha<1-1/p$. 
Then it holds that 
\begin{equation}\label{ubd} 
J_\lambda^\alpha \le \Lambda_{\alpha,p} 
\end{equation}
for all $\lambda\in \mathbb R$.
\end{lem}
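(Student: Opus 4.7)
\medskip
\noindent\textbf{Proof proposal.} The plan is to construct explicit test functions concentrated in thin tubular neighborhoods of $\partial\Omega$ whose Rayleigh quotients $\chi_\lambda^\alpha$ approach $\Lambda_{\alpha,p}$, thereby lifting the one--dimensional Lemma \ref{lem3.1} to the $N$-dimensional setting. Since $\lambda\mapsto J_\lambda^\alpha$ is non-increasing, no monotonicity trick simplifies matters: the bound must be proved for every $\lambda\in\mathbb R$, and the perturbation term $-\lambda\int_\Omega|u|^p\delta^{\alpha p}dx$ must be absorbed.

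First I would fix $\varepsilon>0$ and a small $\eta>0$, apply Lemma \ref{lem3.1} to obtain $h=h_{\varepsilon,\eta}\in W^{1,p}_{\alpha,0}((0,\eta))$ with
\[
\int_0^\eta|h'(t)|^p t^{\alpha p}dt\le(\Lambda_{\alpha,p}+\varepsilon)\int_0^\eta|h(t)|^p t^{(\alpha-1)p}dt,
\]
and set $u_\eta(x):=h(\delta(x))$, extended by zero outside $\Omega_\eta$. Since $\Omega$ is of class $C^2$, the distance function $\delta$ is $C^2$ on $\Omega_\eta$ for $\eta$ sufficiently small and satisfies $|\nabla\delta|=1$ there; a mollification argument combined with the density of $C_c^\infty((0,\eta))$ in $W^{1,p}_{\alpha,0}((0,\eta))$ shows that $u_\eta\in W^{1,p}_{\alpha,0}(\Omega)$. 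The chain rule yields $|\nabla u_\eta(x)|=|h'(\delta(x))|$ in $\Omega_\eta$, so the coarea formula, applied through the tubular parametrization $(t,\sigma)\mapsto\sigma-t\nu(\sigma)$, reduces the three relevant integrals to one-dimensional ones:
\[
\int_{\Omega}|\nabla u_\eta|^p\delta^{\alpha p}dx=\int_0^\eta|h'(t)|^p t^{\alpha p}A(t)\,dt,
\]
and similarly with $|u_\eta|^p\delta^{(\alpha-1)p}$ and $|u_\eta|^p\delta^{\alpha p}$, where $A(t)$ is the $(N{-}1)$-Hausdorff measure of the parallel surface at distance $t$, satisfying $A(t)=|\partial\Omega|+O(t)$ uniformly as $t\to 0^+$.

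Dividing, the factors $|\partial\Omega|(1+O(\eta))$ cancel between numerator and denominator, and I bound the $\lambda$-term by means of the pointwise estimate $\delta^{\alpha p}=\delta^p\cdot\delta^{(\alpha-1)p}\le\eta^p\delta^{(\alpha-1)p}$ on $\Omega_\eta$; altogether
\[
\chi_\lambda^\alpha(u_\eta)\le(1+O(\eta))(\Lambda_{\alpha,p}+\varepsilon)+|\lambda|\,(1+O(\eta))\,\eta^p.
\]
Letting $\eta\to 0^+$ and then $\varepsilon\to 0^+$ gives $J_\lambda^\alpha\le\Lambda_{\alpha,p}$. The main obstacle is a bookkeeping one, namely verifying that the Jacobian expansion $A(t)=|\partial\Omega|+O(t)$ is uniform and controlling the ratio $I_\nabla/I_{-1}$ independently of $\eta$ (the Lemma \ref{lem3.1} bound is scale-invariant, so the implicit constant in $O(\eta)$ from the Jacobian does not degrade as $\eta\to 0$); no genuine analytic difficulty is anticipated because the smoothness of $\partial\Omega$ makes the $C^2$-tubular structure of $\Omega_\eta$ available for small $\eta$.
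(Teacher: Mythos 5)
Your proposal is correct and follows essentially the same route as the paper: build $u(x)=h(\delta(x))$ from the one-dimensional test function of Lemma \ref{lem3.1}, supported in the tubular neighborhood $\Omega_\eta$, use the $C^2$ tubular parametrization with Jacobian $1+O(t)$ to reduce all three integrals to one-dimensional ones, absorb the $\lambda$-term via $\delta^{\alpha p}\le\eta^p\delta^{(\alpha-1)p}$ on $\Omega_\eta$, and let $\eta\to 0^+$ and $\varepsilon\to 0^+$. Your observation that the scale invariance of Lemma \ref{lem3.1} keeps the Jacobian error harmless as $\eta\to 0$ is exactly the point that makes the paper's argument work.
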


\begin{proof}
Since the boundary $\partial\Omega$ is of class $C^2$, there exists an $\eta_0>0$ such that 
for any $\eta\in(0,\eta_0)$ and every $x\in\Omega_\eta$ 
we have a unique point $\sigma(x)\in\partial\Omega$ satisfying $\delta(x)=|x-\sigma(x)|$. 
The mapping 
\begin{equation*}
\Omega_\eta\ni x\mapsto (\delta(x),\sigma(x))=(t,\sigma)\in(0,\eta)\times\partial\Omega
\end{equation*}
is a $C^2$ diffeomorphism, and its inverse is given by 
\begin{equation*}
(0,\eta)\times\partial\Omega\ni(t,\sigma)\mapsto 
x(t,\sigma)=\sigma+t\cdot n(\sigma)\in\Omega_\eta,
\end{equation*}
where $n(\sigma)$ is the inward unit normal to $\partial\Omega$ at $\sigma\in\partial\Omega$. 
For each $t\in(0,\eta)$, the mapping 
\begin{equation*}
\partial\Omega\ni\sigma\mapsto 
\sigma_t(\sigma)=x(t,\sigma)\in\Sigma_t=\{x\in\Omega:\delta(x)=t\}
\end{equation*}
is a also a $C^2$ diffeomorphism of $\partial\Omega$ onto $\Sigma_t$, 
and its Jacobian satisfies 
\begin{equation}\label{3.5}
|\text{Jac}\,\sigma_t(\sigma)-1|\le ct \qquad \text{for any} \ \ \sigma\in\partial\Omega,
\end{equation}
where $c$ is a positive constant depending only on $\eta_0$, $\partial\Omega$  
and the choice of local coordinates. 
Since $n(\sigma)$ is orthogonal to $\Sigma_t$ 
at $\sigma_t(\sigma)=\sigma+t\cdot n(\sigma)\in\Sigma_t$, 
it follows that for every integrable function $v$ in $\Omega_\eta$
\begin{align}\label{3.6}
\int_{\Omega_\eta}v(x)dx 
& =\int_0^\eta dt\int_{\Sigma_t}v(\sigma_t)d\sigma_t 
\nonumber \\
& =\int_0^\eta dt\int_{\partial\Omega}v(x(t,\sigma))|\text{Jac}\,\sigma_t(\sigma)|d\sigma,
\end{align}
where $d\sigma$ and $d\sigma_t$ denote surface elements on 
$\partial\Omega$ and $\Sigma_t$, respectively. 
Hence (\ref{3.6}) together with (\ref{3.5}) implies that 
for every integrable function $v$ in $\Omega_\eta$
\begin{align}
\int_0^\eta (1-ct)dt\int_{\partial\Omega}|v(x(t,\sigma))|d\sigma 
& \le \int_{\Omega_\eta}|v(x)|dx \label{3.7} 
\\ 
& \le \int_0^\eta (1+ct)dt\int_{\partial\Omega}|v(x(t,\sigma))|d\sigma. \label{3.8}
\end{align}

Let $\varepsilon>0$, and let $\eta\in(0,\eta_0)$. 
Take $h\in W^{1,p}_{\alpha, 0}((0,\eta))$ be a function satisfying (\ref{3.1}). 
Put 
\begin{equation}\label{3.9}
u(x)= \begin{cases} h(\delta(x)) & \text{if} \quad x\in \Omega_\eta,\\ 
\quad \ 0 & \text{if} \quad x\in \Omega\setminus\Omega_\eta. \end{cases}  
\end{equation}
Since $|\nabla u(x)|=|h'(\delta(x))|$ for $x\in\Omega_\eta$ by $|\nabla \delta(x)|=1$, 
it follows from (\ref{3.8}) that 
\begin{equation}\label{3.10}
\int_{\Omega_\eta}|\nabla u|^p\delta^{\alpha p}dx 
\le (1+c\eta)|\partial\Omega|\int_0^\eta |h'(t)|^pt^{\alpha p}dt,  
\end{equation}
which implies $u\in W^{1,p}_{\alpha,0}(\Omega)$ by $\supp u\subset \Omega_\eta$. 
On the other hand, by (\ref{3.7}) and (\ref{3.9}) we have that 
\begin{equation}\label{3.11}
\int_{\Omega_\eta}|u|^p\delta^{(\alpha-1) p}dx 
\ge (1-c\eta)|\partial\Omega|\int_0^\eta |h(t)|^pt^{(\alpha-1) p}dt.  
\end{equation}
Since $\supp u\subset \Omega_\eta$, 
by combining (\ref{3.10}), (\ref{3.11}) and the estimate 
\begin{equation*}
\int_{\Omega_\eta}|u|^p\delta^{\alpha p}dx 
\le \eta^p\int_{\Omega_\eta}|u|^p\delta^{(\alpha-1) p}dx,  
\end{equation*}
we obtain that 
\begin{equation*}
\chi_\lambda^\alpha(u) 
\le \frac{1+c\eta}{1-c\eta}\frac{\int_0^\eta |h'(t)|^pt^{\alpha p}dt}
{\int_0^\eta |h(t)|^pt^{(\alpha-1) p}dt} +|\lambda|\eta^p. 
\end{equation*}
This together with (\ref{3.1}) implies that 
\begin{equation}\label{3.12}
J_\lambda^\alpha 
\le \frac{1+c\eta}{1-c\eta}(\Lambda_{\alpha,p}+\varepsilon) +|\lambda|\eta^p. 
\end{equation}
Letting $\eta\to +0$ in (\ref{3.12}), (\ref{ubd}) follows. Therefore it concludes the proof.
\end{proof}

\begin{lem}\label{lem3.3}
Let $\Omega$ be  a  bounded domain of  class $C^2$ in $\mathbb R^N$. 
Let $1<p<\infty$ and $\alpha<1-1/p$. 
Then there exists a $\lambda\in \mathbb R$ such that $J_\lambda^\alpha =\Lambda_{\alpha,p}$.
\end{lem}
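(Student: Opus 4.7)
The plan is to combine Lemma \ref{lem3.2} with a matching lower bound $J_\lambda^\alpha \ge \Lambda_{\alpha,p}$ that holds once $\lambda$ is taken sufficiently negative. The idea is to split the domain as $\Omega = \Omega_\eta \cup (\Omega\setminus\Omega_\eta)$ for a fixed small $\eta>0$: on the tubular neighborhood $\Omega_\eta$ the boundary Hardy inequality from Theorem \ref{thm2.2} supplies exactly the constant $\Lambda_{\alpha,p}$ we want, while on the interior piece $\Omega\setminus\Omega_\eta$ the distance $\delta(x)$ is bounded below by $\eta$, so all the weights $\delta^{(\alpha-1)p}$ and $\delta^{\alpha p}$ become comparable to constants and can be absorbed by the $\lambda$-term.

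First I would fix $\eta\in(0,\eta_0)$ so that Theorem \ref{thm2.2} applies, giving
\begin{equation*}
\int_\Omega |\nabla u|^p\delta^{\alpha p}\,dx
\ge \int_{\Omega_\eta}|\nabla u|^p\delta^{\alpha p}\,dx
\ge \Lambda_{\alpha,p}\int_{\Omega_\eta}|u|^p\delta^{(\alpha-1)p}\,dx.
\end{equation*}
It remains to estimate the interior contribution $\int_{\Omega\setminus\Omega_\eta}|u|^p\delta^{(\alpha-1)p}\,dx$. Since $\alpha<1-1/p<1$ on $\Omega\setminus\Omega_\eta$ the identity $\delta^{(\alpha-1)p}=\delta^{-p}\delta^{\alpha p}\le \eta^{-p}\delta^{\alpha p}$ holds, hence
\begin{equation*}
\Lambda_{\alpha,p}\int_{\Omega\setminus\Omega_\eta}|u|^p\delta^{(\alpha-1)p}\,dx
\le \Lambda_{\alpha,p}\,\eta^{-p}\int_\Omega |u|^p\delta^{\alpha p}\,dx.
\end{equation*}

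Choosing $\lambda = -\Lambda_{\alpha,p}\eta^{-p}$, I would then add and subtract the interior piece to obtain, for every $u\in W^{1,p}_{\alpha,0}(\Omega)\setminus\{0\}$,
\begin{equation*}
\int_\Omega|\nabla u|^p\delta^{\alpha p}\,dx - \lambda\int_\Omega|u|^p\delta^{\alpha p}\,dx
\ge \Lambda_{\alpha,p}\int_\Omega|u|^p\delta^{(\alpha-1)p}\,dx,
\end{equation*}
which is exactly $\chi_\lambda^\alpha(u) \ge \Lambda_{\alpha,p}$. Taking infimum yields $J_\lambda^\alpha \ge \Lambda_{\alpha,p}$, and combined with Lemma \ref{lem3.2} this gives $J_\lambda^\alpha = \Lambda_{\alpha,p}$ for this explicit $\lambda$.

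The argument is essentially routine once Theorem \ref{thm2.2} is in hand; there is no serious obstacle. The only point requiring a little care is keeping track of the sign conditions (using $\alpha<1-1/p$ to ensure $(\alpha-1)p<0$, so that $\delta\ge\eta$ gives an upper bound on $\delta^{(\alpha-1)p}$ via the elementary manipulation $\delta^{(\alpha-1)p}=\delta^{-p}\delta^{\alpha p}$), and then picking $\lambda$ negative enough, namely $\lambda\le -\Lambda_{\alpha,p}\eta^{-p}$, to absorb the interior term.
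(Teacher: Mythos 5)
Your proposal is correct and follows essentially the same route as the paper: split $\Omega$ into $\Omega_\eta$ and $\Omega\setminus\Omega_\eta$, apply Theorem \ref{thm2.2} on the tubular neighborhood, bound the interior term by $\eta^{-p}\int_\Omega|u|^p\delta^{\alpha p}\,dx$, and choose $\lambda\le-\Lambda_{\alpha,p}\eta^{-p}$ to absorb it, then invoke Lemma \ref{lem3.2} for the matching upper bound.
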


\begin{proof}
Let $\eta>0$ be a sufficiently small number as in Theorem \ref{thm2.2}. 
For any $u\in W^{1,p}_{\alpha,0}(\Omega)\setminus\{0\}$, 
by using Hardy's inequality (\ref{HI2}) and the estimate 
\begin{equation*}
\int_{\Omega\setminus\Omega_\eta}|u|^p\delta^{(\alpha-1) p}dx
\le \eta^{-p}\int_{\Omega}|u|^p\delta^{\alpha p}dx,
\end{equation*}
we have that 
\begin{align*}
\Lambda_{\alpha,p}\int_{\Omega}|u|^p\delta^{(\alpha-1) p}dx 
& = \Lambda_{\alpha,p}\int_{\Omega_\eta}|u|^p\delta^{(\alpha-1) p}dx
+\Lambda_{\alpha,p}\int_{\Omega\setminus\Omega_\eta}|u|^p\delta^{(\alpha-1) p}dx 
\\ 
& \le \int_{\Omega}|\nabla u|^p\delta^{\alpha p}dx  
+ \Lambda_{\alpha,p}\eta^{-p}\int_{\Omega}|u|^p\delta^{\alpha p}dx,
\end{align*}
which implies that 
\begin{equation*}
\chi_\lambda^\alpha(u)\ge \Lambda_{\alpha,p}
\end{equation*}
for $\lambda\le -\Lambda_{\alpha,p}\eta^{-p}$. 
Consequently, it holds that $J_\lambda^\alpha \ge \Lambda_{\alpha,p}$ 
for $\lambda\le -\Lambda_{\alpha,p}\eta^{-p}$.  
This together with (\ref{ubd}) implies the desired conclusion. 
\end{proof}

\begin{lem}\label{lem3.4}
Let $\Omega$ be  a  bounded domain of  class $C^2$ in $\mathbb R^N$. 
Let $1<p<\infty$ and $\alpha<1-1/p$. 
Then the function $\lambda\mapsto J^\alpha_\lambda$ is Lipschitz continuous on $\mathbb R$.
\end{lem}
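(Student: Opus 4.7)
The plan is to deduce Lipschitz continuity directly from an elementary pointwise comparison between the two weights, bypassing any existence theory. For any $u\in W^{1,p}_{\alpha,0}(\Omega)\setminus\{0\}$ and any $\lambda_1,\lambda_2\in\mathbb R$, the numerator of $\chi^\alpha_\lambda(u)$ is affine in $\lambda$, so the straightforward algebraic identity
\begin{equation*}
\chi^\alpha_{\lambda_1}(u)-\chi^\alpha_{\lambda_2}(u)
=(\lambda_2-\lambda_1)\,\frac{\int_\Omega |u|^p\delta^{\alpha p}\,dx}{\int_\Omega |u|^p\delta^{(\alpha-1)p}\,dx}
\end{equation*}
holds. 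The ratio on the right is the quantity I need to control uniformly in $u$.

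The key observation is that $\delta^{\alpha p}=\delta^{p}\cdot\delta^{(\alpha-1)p}$, together with the pointwise bound $\delta(x)\le D:=\sup_{x\in\Omega}\delta(x)<\infty$ (finite since $\Omega$ is bounded). Hence
\begin{equation*}
\int_\Omega |u|^p\delta^{\alpha p}\,dx
\le D^p\int_\Omega |u|^p\delta^{(\alpha-1)p}\,dx,
\end{equation*}
so that the ratio above is bounded above by $D^p$ independently of $u$. Consequently
\begin{equation*}
|\chi^\alpha_{\lambda_1}(u)-\chi^\alpha_{\lambda_2}(u)|\le D^p|\lambda_1-\lambda_2|
\end{equation*}
for every admissible $u$.

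Finally, I would pass to the infimum. From $\chi^\alpha_{\lambda_1}(u)\le \chi^\alpha_{\lambda_2}(u)+D^p|\lambda_1-\lambda_2|$ one obtains $J^\alpha_{\lambda_1}\le J^\alpha_{\lambda_2}+D^p|\lambda_1-\lambda_2|$ by taking the infimum over $u$, and the symmetric inequality follows by exchanging the roles of $\lambda_1$ and $\lambda_2$. This yields $|J^\alpha_{\lambda_1}-J^\alpha_{\lambda_2}|\le D^p|\lambda_1-\lambda_2|$, which is the desired Lipschitz bound with constant $D^p$. There is essentially no obstacle here: the only subtle point is to notice that the weight ratio $\delta^{\alpha p}/\delta^{(\alpha-1)p}=\delta^p$ is globally bounded on $\Omega$, which is precisely where the boundedness of $\Omega$ enters.
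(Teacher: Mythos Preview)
Your proof is correct and follows essentially the same approach as the paper: both compute $\chi^\alpha_{\lambda_1}(u)-\chi^\alpha_{\lambda_2}(u)$ explicitly, bound the weight ratio by $\bigl(\sup_{\Omega}\delta\bigr)^p$ via $\delta^{\alpha p}=\delta^{p}\cdot\delta^{(\alpha-1)p}$, and then pass to the infimum. Your write-up is in fact slightly more explicit about the final step of taking the infimum on both sides.
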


\begin{proof}
Let $\lambda,\bar{\lambda}\in\mathbb R$. 
Then it holds that for any $u \in W^{1,p}_{\alpha,0}(\Omega)\setminus\{0\}$
\begin{equation*}
|\chi_\lambda^\alpha(u)-\chi_{\bar{\lambda}}^\alpha(u)|
=|\lambda -\bar{\lambda}|
\frac{\int_\Omega|u|^p \delta^{\alpha p}dx}{\int_\Omega|u|^p \delta^{(\alpha-1)p}dx}
\le M^p|\lambda -\bar{\lambda}|,
\end{equation*}
where $M=\sup_{x\in\Omega}\delta(x)$ is a positive constant depending only on $\Omega$. 
Hence we see that  
\begin{equation*}
|J_\lambda^\alpha-J_{\bar{\lambda}}^\alpha|\le M^p|\lambda -\bar{\lambda}|
\end{equation*}
for $\lambda,\bar{\lambda}\in\mathbb R$. It completes the proof.
\end{proof}

\medskip

\noindent
{\bf  Proof of the assertion 1 of Theorem \ref{Main}.} \ 
By Lemma \ref{lem3.3} and $\lim_{\lambda\to\infty}J_\lambda^\alpha=-\infty$,  
the set $\{\lambda \in \mathbb R : J_\lambda^\alpha = \Lambda_{\alpha,p} \}$ 
is non-empty and upper bounded. 
Hence the 
$\sup\{\lambda \in \mathbb R : J_\lambda^\alpha = \Lambda_{\alpha,p} \}$ exists finitely. 
Put 
\begin{equation}
\lambda^\ast = \sup\{\lambda \in \mathbb R : J_\lambda^\alpha = \Lambda_{\alpha,p} \}. \label{lambda*}
\end{equation}
Since the function $\lambda\mapsto J_\lambda^\alpha$ is non-increasing on $\mathbb R$, 
it follows from Lemma \ref{lem3.2} and Lemma \ref{lem3.3} that 
$J_\lambda^\alpha=\Lambda_{\alpha,p}$ for $\lambda<\lambda^\ast$ 
and $J_\lambda^\alpha<\Lambda_{\alpha,p}$ for $\lambda>\lambda^\ast$. 
Further, by Lemma \ref{lem3.4} 
we have the equality $J_{\lambda^\ast}^\alpha=\Lambda_{\alpha,p}$.
Therefore the assertion 1 of Theorem \ref{Main} is valid.
\qed

\subsection{ $J^\alpha_\lambda$ is  not  attained when $\lambda < \lambda^\ast$}

Next, we prove the assertion 2 of Theorem \ref{Main}.

\medskip

\noindent
{\bf  Proof of the assertion 2 of Theorem \ref{Main}.} \ 
Suppose that for some $\lambda<\lambda^\ast$ 
the infimum $J_\lambda^\alpha$ in (\ref{J}) is attained 
at an element $u\in W^{1,p}_{\alpha,0}(\Omega)\setminus\{0\}$. 
Then, by the assertion 1 of Theorem \ref{Main}, we have that 
\begin{equation}\label{3.14}
\chi_\lambda^\alpha(u) =J_\lambda^\alpha= \Lambda_{\alpha,p}
\end{equation}
and for $\lambda<\bar{\lambda}<\lambda^\ast$
\begin{equation}\label{3.15}
\chi_{\bar{\lambda}}^\alpha(u) \ge J_{\bar{\lambda}}^\alpha= \Lambda_{\alpha,p}. 
\end{equation}
From (\ref{3.14}) and (\ref{3.15}) it follows that 
\begin{equation*}
(\bar{\lambda}-\lambda)\int_{\Omega}|u|^p\delta^{\alpha p}dx \le 0. 
\end{equation*}
Since $\bar{\lambda}-\lambda>0$, we conclude that 
\begin{equation*}
\int_{\Omega}|u|^p\delta^{\alpha p}dx = 0,
\end{equation*}
which contradicts $u\ne 0$ in $W^{1,p}_{\alpha,0}(\Omega)$. 
Therefore it completes the proof.
\qed

\subsection{Attainability of $J^\alpha_\lambda$ when $\lambda>\lambda^\ast$ }

At last, we prove the assertion 3 of Theorem \ref{Main}.

\medskip

Let $\{u_k\}$ be a minimizing sequence for the variational problem (\ref{J}) normalized so that 
\begin{equation}\label{ms1}
\int_{\Omega}|u_k|^p\delta^{(\alpha-1) p}dx = 1 \quad \text{for all} \ k.  
\end{equation}
Since $\{u_k\}$ is bounded in $W^{1,p}_{\alpha,0}(\Omega)$, by taking a suitable subsequence, 
we may assume that there exists a $u\in W^{1,p}_{\alpha,0}(\Omega)$ such that 
\begin{align}
\nabla u_k \stackrel{weak}{\longrightarrow} \nabla u \quad 
& \text{in} \ \  (L^p(\Omega,\delta^{\alpha p}))^N, \label{ms2} \\
u_k  \stackrel{weak}{\longrightarrow} u \quad 
& \text{in} \ \  L^p(\Omega,\delta^{(\alpha-1) p}) \label{ms3} 
\end{align}
and 
\begin{equation}\label{ms4} 
u_k \longrightarrow u \quad \text{in} \ \  L^p(\Omega,\delta^{\alpha p}) 
\end{equation}
by Hardy's inequality (\ref{HI3}) and the compact embedding 
$W^{1,p}_{\alpha,0}(\Omega) \hookrightarrow L^p(\Omega,\delta^{\alpha p})$. 

\medskip

Under these preparation we establish the properties of concentration and compactness 
for the minimizing sequence, respectively.

\begin{prop}\label{concentration}
Let $\Omega$ be  a  bounded domain of  class $C^2$ in $\mathbb R^N$. 
Let $1<p<\infty$ and $\alpha<1-1/p$. Let $\lambda\in \mathbb R$. 
Let $\{u_k\}$ be a minimizing sequence for (\ref{J}) satisfying (\ref{ms1}), (\ref{ms2}), (\ref{ms3}) and (\ref{ms4}) with $u=0$. 
Then it holds that 
\begin{equation}\label{3.20}
\nabla u_k \longrightarrow 0 \quad \text{in} \ \ (L^p_{\rm loc}(\Omega))^N 
\end{equation}
and 
\begin{equation}\label{3.21}
J_\lambda^\alpha = \Lambda_{\alpha,p}.  
\end{equation}
\end{prop}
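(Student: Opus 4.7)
\medskip

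\noindent
\textbf{Proof proposal for Proposition \ref{concentration}.}

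The plan is to exploit the fact that when the weak limit vanishes but the $\delta^{(\alpha-1)p}$-mass stays equal to $1$, that mass must concentrate against the boundary, precisely where Theorem \ref{thm2.2} applies with the sharp constant $\Lambda_{\alpha,p}$. First I would combine the normalization (\ref{ms1}), the strong convergence (\ref{ms4}) with $u=0$, and the elementary estimate $\delta^{(\alpha-1)p}=\delta^{-p}\delta^{\alpha p}\le \eta^{-p}\delta^{\alpha p}$ on $\Omega\setminus\Omega_\eta$ to obtain
\begin{equation*}
\int_{\Omega\setminus\Omega_\eta}|u_k|^p\delta^{(\alpha-1)p}\,dx\;\longrightarrow\;0,
\qquad
\int_{\Omega_\eta}|u_k|^p\delta^{(\alpha-1)p}\,dx\;\longrightarrow\;1
\end{equation*}
for every sufficiently small fixed $\eta>0$. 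Simultaneously, since $\chi_\lambda^\alpha(u_k)\to J_\lambda^\alpha$ and $\int_\Omega|u_k|^p\delta^{\alpha p}\,dx\to 0$ by (\ref{ms4}), the definition of $\chi_\lambda^\alpha$ gives $\int_\Omega|\nabla u_k|^p\delta^{\alpha p}\,dx=J_\lambda^\alpha+o(1)$.

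Next I would split this gradient integral over $\Omega_\eta$ and $\Omega\setminus\Omega_\eta$ and apply Theorem \ref{thm2.2} on $\Omega_\eta$ (which is legitimate because $u_k\in W^{1,p}_{\alpha,0}(\Omega)$), yielding
\begin{equation*}
J_\lambda^\alpha+o(1)
\;\ge\;\Lambda_{\alpha,p}\int_{\Omega_\eta}|u_k|^p\delta^{(\alpha-1)p}\,dx
\;+\;\int_{\Omega\setminus\Omega_\eta}|\nabla u_k|^p\delta^{\alpha p}\,dx
\;=\;\Lambda_{\alpha,p}+o(1)+\int_{\Omega\setminus\Omega_\eta}|\nabla u_k|^p\delta^{\alpha p}\,dx.
\end{equation*}
Combining this with the upper bound $J_\lambda^\alpha\le\Lambda_{\alpha,p}$ from Lemma \ref{lem3.2} simultaneously forces the identity (\ref{3.21}) and
\begin{equation*}
\limsup_{k\to\infty}\int_{\Omega\setminus\Omega_\eta}|\nabla u_k|^p\delta^{\alpha p}\,dx\;=\;0
\end{equation*}
for every small $\eta>0$. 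Since $\delta^{\alpha p}$ is bounded below by a positive constant on $\Omega\setminus\Omega_\eta$, and every compact $K\subset\Omega$ lies inside some $\Omega\setminus\Omega_\eta$, the desired local convergence (\ref{3.20}) follows at once.

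The conceptual core is genuinely short: it is a two-line squeeze between Theorem \ref{thm2.2} and Lemma \ref{lem3.2}. The only point demanding some care — and the place where I would be most attentive — is the mass-concentration step, i.e.\ transferring the strong convergence in the weight $\delta^{\alpha p}$ into vanishing of the $\delta^{(\alpha-1)p}$-integral away from $\partial\Omega$. This hinges on the elementary but indispensable inequality $\delta^{(\alpha-1)p}\le \eta^{-p}\delta^{\alpha p}$ on $\Omega\setminus\Omega_\eta$, which is what permits the $\delta^{(\alpha-1)p}$-norm to inherit the strong convergence of $u_k$ in the $\delta^{\alpha p}$-weighted $L^p$ space and thereby drives the concentration on $\Omega_\eta$.
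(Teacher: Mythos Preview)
Your proposal is correct and follows essentially the same route as the paper: the same elementary comparison $\delta^{(\alpha-1)p}\le\eta^{-p}\delta^{\alpha p}$ on $\Omega\setminus\Omega_\eta$, the same application of Theorem \ref{thm2.2} on $\Omega_\eta$, and the same squeeze against Lemma \ref{lem3.2}. The only cosmetic difference is that you first rewrite $\int_\Omega|\nabla u_k|^p\delta^{\alpha p}\,dx=J_\lambda^\alpha+o(1)$ and then split, whereas the paper keeps $\chi_\lambda^\alpha(u_k)$ on the left and splits the right-hand side directly; the two presentations are equivalent.
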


\begin{proof} 
Let $\eta>0$ be a sufficiently small number as in Theorem \ref{thm2.2}. 
By Hardy's inequality (\ref{HI2}) and (\ref{ms1}) we have that 
\begin{align}
\int_{\Omega_\eta}|\nabla u_k|^p\delta^{\alpha p}dx 
& \ge \Lambda_{\alpha,p}\int_{\Omega_\eta}|u_k|^p\delta^{(\alpha-1) p}dx \nonumber
\\ 
& =\Lambda_{\alpha,p}
\left(1-\int_{\Omega\setminus\Omega_\eta}|u_k|^p\delta^{(\alpha-1) p}dx\right), \nonumber 
\end{align}
and so 
\begin{align}\label{3.22}
\chi_\lambda^\alpha(u_k) \ge 
& \Lambda_{\alpha,p}
\left(1-\int_{\Omega\setminus\Omega_\eta}|u_k|^p\delta^{(\alpha-1) p}dx\right) \nonumber 
\\ 
& + \int_{\Omega\setminus\Omega_\eta}|\nabla u_k|^p\delta^{\alpha p}dx 
-\lambda \int_\Omega |u_k|^p\delta^{\alpha p}dx. 
\end{align}
Since 
\begin{equation*}
\int_{\Omega\setminus\Omega_\eta}|u_k|^p\delta^{(\alpha-1) p}dx 
\le \eta^{-p} \int_{\Omega}|u_k|^p\delta^{\alpha p}dx,
\end{equation*}
it follows from (\ref{ms4}) with $u=0$ that 
\begin{equation}\label{3.23}
\lim_{k\to\infty}\int_{\Omega\setminus\Omega_\eta}|u_k|^p\delta^{(\alpha-1) p}dx=0.  
\end{equation}
Hence, by (\ref{3.22}), (\ref{3.23}) and (\ref{ms4}) with $u=0$, we obtain that 
\begin{equation*}
\limsup_{k\to\infty}\int_{\Omega\setminus\Omega_\eta}|\nabla u_k|^p\delta^{\alpha p}dx 
\le J_\lambda^\alpha -\Lambda_{\alpha,p}. 
\end{equation*}
Since $J_\lambda^\alpha -\Lambda_{\alpha,p}\le 0$ by Lemma \ref{3.2}, we conclude that 
\begin{equation}\label{3.24}
\lim_{k\to\infty}\int_{\Omega\setminus\Omega_\eta}|\nabla u_k|^p\delta^{\alpha p}dx=0, 
\end{equation}
and so 
\begin{equation*}
\lim_{k\to\infty}\int_{\Omega\setminus\Omega_\eta}|\nabla u_k|^p dx=0. 
\end{equation*}
This shows (\ref{3.20}). 
Moreover, letting $k\to\infty$ in (\ref{3.22}), 
it follows from (\ref{3.23}), (\ref{3.24}) and (\ref{ms4}) with $u=0$ that 
\begin{equation*}
J_\lambda^\alpha \ge \Lambda_{\alpha,p}.
\end{equation*}
This together with Lemma \ref{3.2} implies (\ref{3.21}). 
Consequently it completes the proof.
\end{proof}

\begin{prop}\label{compactness}
Let $\Omega$ be  a  bounded domain of  class $C^2$ in $\mathbb R^N$. 
Let $1<p<\infty$ and $\alpha<1-1/p$. Let $\lambda\in \mathbb R$. 
Let $\{u_k\}$ be a minimizing sequence for (\ref{J}) satisfying (\ref{ms1}), (\ref{ms2}), (\ref{ms3}) and (\ref{ms4}) with $u\ne 0$. 
Then it holds that 
\begin{equation}\label{3.25}
J_\lambda^\alpha =\min(\Lambda_{\alpha,p}, \chi_\lambda^\alpha(u)). 
\end{equation}
In addition, if $J_\lambda^\alpha <\Lambda_{\alpha,p}$, then it holds that 
\begin{equation}\label{minimizer} 
J_\lambda^\alpha=\chi_\lambda^\alpha(u), 
\end{equation}
namely $u$ is a minimizer for (\ref{J}), and  
\begin{equation}\label{3.27}
u_k \longrightarrow u \quad \text{in} \ \ W^{1,p}_{\alpha,0}(\Omega). 
\end{equation} 
\end{prop}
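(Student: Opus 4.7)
My plan is to localize the analysis to the tubular neighborhood $\Omega_\eta$, where Theorem \ref{thm2.2} supplies the sharp Hardy constant $\Lambda_{\alpha,p}$, and to the complement $\Omega\setminus\Omega_\eta$, where the weight $\delta^{(\alpha-1)p}$ is bounded and weak lower semicontinuity applies directly. This route avoids a Brezis--Lieb-type decomposition of the gradient, which would require pointwise a.e.\ convergence of $\nabla u_k$ that is not available from (\ref{ms2}) alone.

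After passing to a subsequence, $u_k \to u$ a.e. Set $B := \int_\Omega |u|^p \delta^{(\alpha-1)p}dx$; since $u \ne 0$ in $W^{1,p}_{\alpha,0}(\Omega)$ one has $B > 0$, while Fatou's lemma combined with (\ref{ms1}) gives $B \le 1$. Because $\delta^{(\alpha-1)p} \le \eta^{(\alpha-1)p}$ on $\Omega\setminus\Omega_\eta$, the strong convergence (\ref{ms4}) yields
\begin{equation*}
\int_{\Omega\setminus\Omega_\eta} |u_k|^p \delta^{(\alpha-1)p}dx \longrightarrow b_\eta := \int_{\Omega\setminus\Omega_\eta} |u|^p \delta^{(\alpha-1)p}dx,
\end{equation*}
with $b_\eta \uparrow B$ as $\eta \downarrow 0$ by monotone convergence. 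Applying Theorem \ref{thm2.2} to $u_k$ on $\Omega_\eta$ and using the normalization (\ref{ms1}),
\begin{equation*}
\int_{\Omega_\eta}|\nabla u_k|^p \delta^{\alpha p}dx
\;\ge\; \Lambda_{\alpha,p}\Bigl(1 - \int_{\Omega\setminus\Omega_\eta} |u_k|^p \delta^{(\alpha-1)p}dx\Bigr).
\end{equation*}
Splitting $\int_\Omega |\nabla u_k|^p \delta^{\alpha p}dx$ over $\Omega_\eta$ and $\Omega\setminus\Omega_\eta$, bounding the first term by the display above and the second from below by weak lower semicontinuity of (\ref{ms2}) restricted to $\Omega\setminus\Omega_\eta$, and then passing $k \to \infty$ (which produces a limit since $\chi^\alpha_\lambda(u_k) \to J^\alpha_\lambda$ and $\int_\Omega |u_k|^p \delta^{\alpha p}dx \to \int_\Omega |u|^p \delta^{\alpha p}dx$) followed by $\eta \downarrow 0$ (monotone convergence for $\mathbf{1}_{\Omega\setminus\Omega_\eta}|\nabla u|^p \delta^{\alpha p} \uparrow |\nabla u|^p \delta^{\alpha p}$), one arrives at the key inequality
\begin{equation*}
J^\alpha_\lambda \;\ge\; B\,\chi^\alpha_\lambda(u) + (1-B)\Lambda_{\alpha,p}.
\end{equation*}
Since $B \in (0,1]$, the right-hand side is at least $\min(\chi^\alpha_\lambda(u),\Lambda_{\alpha,p})$; together with the trivial upper bounds $J^\alpha_\lambda \le \chi^\alpha_\lambda(u)$ and $J^\alpha_\lambda \le \Lambda_{\alpha,p}$ (Lemma \ref{lem3.2}), this proves (\ref{3.25}).

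For the remaining assertions, assume $J^\alpha_\lambda < \Lambda_{\alpha,p}$. Then (\ref{3.25}) forces $J^\alpha_\lambda = \chi^\alpha_\lambda(u)$, giving (\ref{minimizer}). Substituting back into the key inequality yields $(1-B)(\chi^\alpha_\lambda(u)-\Lambda_{\alpha,p}) \ge 0$, and $\chi^\alpha_\lambda(u) < \Lambda_{\alpha,p}$ forces $B = 1$. Combined with $\chi^\alpha_\lambda(u_k) \to \chi^\alpha_\lambda(u)$ and (\ref{ms4}), this yields $\int_\Omega |\nabla u_k|^p \delta^{\alpha p}dx \to \int_\Omega |\nabla u|^p \delta^{\alpha p}dx$; because $L^p(\Omega,\delta^{\alpha p})$ is uniformly convex for $1<p<\infty$, this norm convergence combined with the weak convergence (\ref{ms2}) upgrades to strong convergence $\nabla u_k \to \nabla u$ in $L^p(\Omega,\delta^{\alpha p})$, which is (\ref{3.27}). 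The main obstacle I anticipate is the careful derivation of the key inequality: the $\liminf$ in $k$ on $\Omega\setminus\Omega_\eta$ and the sharp-constant Hardy bound on $\Omega_\eta$ must combine so that the subsequent $\eta \downarrow 0$ limit recovers the claimed convex combination of $\chi^\alpha_\lambda(u)$ and $\Lambda_{\alpha,p}$ with no loss.
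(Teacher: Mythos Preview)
Your proof is correct and follows essentially the same route as the paper: split $\Omega$ into $\Omega_\eta$ and its complement, apply the sharp Hardy inequality (Theorem \ref{thm2.2}) on $\Omega_\eta$, use weak lower semicontinuity of the gradient norm and strong $L^p(\Omega,\delta^{\alpha p})$-convergence on $\Omega\setminus\Omega_\eta$, then let $k\to\infty$ followed by $\eta\downarrow 0$ to reach the convex-combination inequality $J^\alpha_\lambda \ge (1-B)\Lambda_{\alpha,p} + B\,\chi^\alpha_\lambda(u)$, and finally upgrade to strong convergence via weak convergence plus convergence of norms. The only cosmetic differences are that you name $B$ explicitly and obtain $B\le 1$ by Fatou after extracting an a.e.\ convergent subsequence, whereas the paper uses weak lower semicontinuity of the $L^p(\Omega,\delta^{(\alpha-1)p})$-norm directly from (\ref{ms3}); and you invoke uniform convexity of $L^p$ explicitly where the paper leaves that step implicit.
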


\begin{proof} 
Let $\eta>0$ be a sufficiently small number as in Theorem \ref{thm2.2}. 
Then we have (\ref{3.22}) by the same arguments as in the proof of Proposition 
\ref{concentration}. 
By the estimate 
\begin{equation*}
\int_{\Omega\setminus\Omega_\eta}|u_k-u|^p\delta^{(\alpha-1) p}dx 
\le \eta^{-p} \int_{\Omega}|u_k-u|^p\delta^{\alpha p}dx,
\end{equation*}
(\ref{ms4}) implies that 
\begin{equation}\label{3.28} 
\lim_{k\to\infty}\int_{\Omega\setminus\Omega_\eta}|u_k|^p\delta^{(\alpha-1) p}dx 
= \int_{\Omega\setminus\Omega_\eta}|u|^p\delta^{(\alpha-1) p}dx. 
\end{equation}
Since it follows from (\ref{ms2}) that $\nabla u_k \longrightarrow \nabla u$ 
weakly in $(L^p(\Omega\setminus\Omega_\eta,\delta^{\alpha p}))^N$, 
by weakly lower semi-continuity of the $L^p$-norm, we see that  
\begin{align}\label{3.29} 
\liminf_{k\to\infty} \int_{\Omega\setminus\Omega_\eta}|\nabla u_k|^p\delta^{\alpha p}dx 
& \ge \left(\liminf_{k\to\infty} 
\| |\nabla u_k| \|_{L^p(\Omega\setminus\Omega_\eta,\delta^{\alpha p})}\right)^p \nonumber 
\\ 
& \ge \| |\nabla u| \|_{L^p(\Omega\setminus\Omega_\eta,\delta^{\alpha p})}^p \nonumber 
\\ 
& =\int_{\Omega\setminus\Omega_\eta}|\nabla u|^p\delta^{\alpha p}dx.  
\end{align}
Hence, by letting $k\to\infty$ in (\ref{3.22}), 
from (\ref{ms4}), (\ref{3.28}) and (\ref{3.29}) it follows that 
\begin{align}\label{3.30}
J_\lambda^\alpha \ge 
\Lambda_{\alpha,p} 
& \left(1-\int_{\Omega\setminus\Omega_\eta}|u|^p\delta^{(\alpha-1) p}dx\right) \nonumber 
\\ 
& + \int_{\Omega\setminus\Omega_\eta}|\nabla u|^p\delta^{\alpha p}dx 
-\lambda \int_\Omega |u|^p\delta^{\alpha p}dx. 
\end{align}
Letting $\eta\to +0$ in (\ref{3.30}), we obtain that 
\begin{align}\label{3.31}
J_\lambda^\alpha \ge \Lambda_{\alpha,p} 
& \left(1-\int_{\Omega}|u|^p\delta^{(\alpha-1) p}dx\right) \nonumber 
\\ 
& + \int_{\Omega}|\nabla u|^p\delta^{\alpha p}dx 
-\lambda \int_\Omega |u|^p\delta^{\alpha p}dx. 
\end{align}
Since it holds that 
\begin{equation}\label{3.32}
0<\int_\Omega |u|^p\delta^{(\alpha-1)p}dx \le 
\liminf_{k\to\infty}\int_\Omega |u_k|^p\delta^{(\alpha-1)p}dx =1 
\end{equation}
by $u\ne 0$, (\ref{ms1}), (\ref{ms3}) and weakly lower semi-continuity of the $L^p$-norm, 
we have from (\ref{3.31}) and (\ref{3.32}) that 
\begin{align}\label{3.33} 
J_\lambda^\alpha 
& \ge \Lambda_{\alpha,p} \left(1-\int_{\Omega}|u|^p\delta^{(\alpha-1) p}dx\right) 
+ \chi_\lambda^\alpha(u) \int_{\Omega}|u|^p\delta^{(\alpha-1)p}dx \nonumber  
\\ 
& \ge \min(\Lambda_{\alpha,p}, \chi_\lambda^\alpha(u)). 
\end{align}
This together with Lemma \ref{3.2} implies (\ref{3.25}). 
Moreover, by (\ref{3.25}) and (\ref{3.33}), we conclude that 
\begin{equation}\label{3.34}
J_\lambda^\alpha 
= \Lambda_{\alpha,p} \left(1-\int_{\Omega}|u|^p\delta^{(\alpha-1) p}dx\right) 
+ \chi_\lambda^\alpha(u) \int_{\Omega}|u|^p\delta^{(\alpha-1)p}dx.  
\end{equation}
In addition, if $J_\lambda^\alpha <\Lambda_{\alpha,p}$, 
then $J_\lambda^\alpha=\chi_\lambda^\alpha(u)$ by (\ref{3.25}), 
and so, it follows from (\ref{3.34}) and (\ref{ms1}) that 
\begin{equation}\label{3.35} 
\int_{\Omega}|u|^p\delta^{(\alpha-1) p}dx=1
=\lim_{k\to\infty}\int_{\Omega}|u_k|^p\delta^{(\alpha-1) p}dx. 
\end{equation}
(\ref{ms3}) and (\ref{3.35}) imply that  
\begin{equation}
u_k \longrightarrow u \quad \text{in} \ \ L^p(\Omega,\delta^{(\alpha-1)p}). 
\end{equation}
Further, by (\ref{ms1}), (\ref{ms4}), (\ref{minimizer}) and (\ref{3.35}), we obtain that 
\begin{align*}
\int_{\Omega}|\nabla u_k|^p\delta^{\alpha p}dx
& =  \chi_\lambda^\alpha(u_k)+\lambda \int_\Omega |u_k|^p\delta^{\alpha p}dx 
\\ 
& \longrightarrow 
\chi_\lambda^\alpha(u)+\lambda \int_\Omega |u|^p\delta^{\alpha p}dx 
=\int_{\Omega}|\nabla u|^p\delta^{\alpha p}dx. 
\end{align*}
This together with (\ref{ms2}) implies that 
\begin{equation}
\nabla u_k \longrightarrow \nabla u \quad \text{in} \ \ (L^p(\Omega,\delta^{\alpha p}))^N, 
\end{equation}
which shows (\ref{3.27}). Consequently it completes the proof.
\end{proof}

\medskip

\noindent
{\bf  Proof of the assertion 3 of Theorem \ref{Main}.} 
Let $\lambda>\lambda^\ast$. 
Then $J_\lambda^\alpha < \Lambda_{\alpha,p}$ by the assertion 1 of Theorem \ref{Main}. 
Let $\{u_k\}$ be a minimizing sequence for (\ref{J}) satisfying (\ref{ms1}), (\ref{ms2}), (\ref{ms3}) and (\ref{ms4}). 
Then we see that $u\ne 0$ by Proposition \ref{concentration}. 
Therefore, by applying Proposition \ref{compactness}, 
we conclude that $\chi_\lambda^\alpha(u) =J_\lambda^\alpha$, 
namely $u$ is a minimizer for (\ref{J}). 
It finishes the proof.
\qed

\bigskip\bigskip

\noindent
Hiroshi Ando:\\
Department of Mathematics, Faculty of Science, Ibaraki University,\\
Mito, Ibaraki, 310-8512, Japan;\\
hiroshi.ando.math@vc.ibaraki.ac.jp

\medskip

\noindent
Toshio Horiuchi:\\
Department of Mathematics, Faculty of Science, Ibaraki University,\\
Mito, Ibaraki, 310-8512, Japan;\\
toshio.horiuchi.math@vc.ibaraki.ac.jp


\begin{thebibliography}{9}

\bibitem{ANM} Adimurthi, N. Nirmalendu, M. Chaudhuri, M. Ramaswamy, 
An improved Hardy-Sobolev inequality and its application, 
{\it Proceedings of the American Mathematical Society}, {\bf Vol.130}, 
No.2 (2001), 489--505.


\bibitem{AH} H.Ando, T.Horiuchi, 
Missing terms in the weighted Hardy-Sobolev inequalities and its application, 
{\it Kyoto J. Math}. {\bf Vol.52}, No.4 (2012), 759--796.


\bibitem{AH2}  H. Ando, T. Horiuchi, General  Weighted Hardy's Inequalities with  compact perturbations, in preparation.



\bibitem{AHL} H. Ando, T. Horiuchi,  X. Liu, 
One dimensional Weighted Hardy's Inequalities and application, 
{\it Journal Mathematical Inequality},  to appear.



\bibitem{AHN}  H. Ando, T. Horiuchi, E. Nakai, 
Weighted Hardy inequalities with infinitely many sharp missing terms, 
{\it Mathematical Journal of Ibaraki University}, {\bf Vol.46} (2014), 9--30. 



\bibitem{BM} H.Brezis, M.Marcus, 
Hardy's inequalities revisited, 
{\it Annali della Scuola Normale Superiore di Pisa, Classe di Scienze $4^e$ s\'{e}rie}, 
tome {\bf 25}, No.1-2 (1997), 217--237.



\bibitem{CS}
Z. Chen, Y. Shen, 
Sharp Hardy-Sobolev inequalities with general weights and remainder terms, 
{\it Journal of inequalities and applications}, (2009), 
Article ID 419845, 24pages doi:10.1155/2009/419845.



\bibitem{D}
E. B. Davis, 
The Hardy constant, 
{\it Quart. J. Math. Oxford}, (2) {\bf Vol.46} (1995), 417--431.



\bibitem{DHA2} A. Detalla, T. Horiuchi, H. Ando, 
Missing terms in Hardy-Sobolev inequalities and its application, 
{\it Far East Journal of Mathematical Sciences}, {\bf Vol.14}, No.3 (2004), 333--359.




\bibitem{H} T. Horiuchi, 
Hardy's inequalities with non-doubling weights and sharp remainders, in preparation.



\bibitem{KO}
A. Kufner, B. Opic, 
Hardy-type Inequalities, 
{\it Pitman Research Notes in Mathematics series}, {\bf Vol.219}, 
London, Longman Group UK Limited, (1990).


\bibitem{LL} E. H. Lieb and M. Loss, 
Analysis, {\it American Mathematical Society}, (2001).


\bibitem{MMP}
M.Marcus, V.J.Mizel, Y.Pinchover, 
On the best constant for Hardy's inequality in $\mathbb R^n$, 
{\it Trans. Amer. Math. Soc}, {\bf Vol.350}, No.8 (1998), 3237--3255.


\bibitem{MS}
T. Matskewich, P. E. Sobolevskii, 
The best possible constant in a generalized Hardy's inequality for convex domains 
in $\mathbb R^n$, 
{\it Nonlinear Analysis TMA}, {\bf Vol.28} (1997), 1601--1610.


\bibitem{M}
V. G. Maz'ja, 
Soboloev spaces (2nd edition), Springer, (2011).


\end{thebibliography}
\end{document}